\documentclass[112pt]{amsart}
\usepackage{mathrsfs}
\usepackage{amsfonts}
\usepackage{amssymb}
\usepackage[papersize={6.7in,9.4in},textwidth=13.4cm,textheight=20cm,centering]{geometry}
\usepackage{enumerate}

    \usepackage[utf8]{inputenc}
    \usepackage[T1]{fontenc}
    \usepackage{libertine} 
    \usepackage[libertine]{newtxmath}

\usepackage{amsmath}
\numberwithin{equation}{section}

\usepackage{multirow}

\usepackage[colorlinks=true,citecolor=blue,linkcolor=blue]{hyperref}
\hypersetup{
pdfstartpage=1,
pdfstartview=FitH}

\usepackage{amsthm,amssymb}
\usepackage{indentfirst}
\usepackage[leqno]{amsmath}

\allowdisplaybreaks

\newtheorem*{theorem*}{Theorem}
\newtheorem{theorem}{Theorem}
\newtheorem{lemma}{Lemma}

\newtheorem{corollary}{Corollary}

\newtheorem*{claim*}{Claim}

\theoremstyle{definition}
\newtheorem{remark}{Remark}

\DeclareMathOperator{\Mod}{mod}

\renewcommand{\bmod}[1]{\,(\Mod{ #1})}

\newcommand{\bN}{\mathbf{N}}

\newcommand{\bZ}{\mathbf{Z}}

\newcommand{\cE}{\mathcal{E}}

\newcommand{\cL}{\mathcal{L}}

\newcommand{\cP}{\mathcal{P}}

\def\le{\leqslant}
\def\leq{\leqslant}

\begin{document}

\title{A problem of D. H. Lehmer in short intervals. I}
\author{Qixiang Shen}

\address{School of Mathematics and Statistics, Xi'an Jiaotong University, Xi'an 710049, P. R. China}
\email{qxshen@yeah.net}

\begin{abstract}
     A problem of D. H. Lehmer suggests to study the number of integers, each of which has different parity from its multiplicative inverse modulo $q.$
     We obtain an asymptotic formula for the number of such integers in very short intervals as long as $q$ is squarefree and has good factorizations, using arithmetic exponent pairs to estimate incomplete Kloosterman sums. This improves an early result of Z. Zheng in the above special cases. We also prove a prime-variable version with the aid of an estimate for incomplete Kloosterman sums in prime variables, due to Bourgain.
\end{abstract}


\maketitle


\section{Introduction and our main result}
Let $q>1$ be an odd integer. For any $a\in\bZ$ with $(a, q)=1$, let $\bar{a}$ denote the unique integer satisfying $1\le \bar{a}\le q-1$ and $a\bar{a}\equiv 1\bmod q$. Define
\begin{align*}
    \mathcal{L}(q)=\{1\le a\le q-1:(a, q)=1, 2\nmid a+\bar{a}\},
    \end{align*}
and
\begin{align*}
    L(q)=\vert\mathcal{L}(q)\vert.
\end{align*}
The problem of determining the asymptotic behavior of $L(q)$ was first proposed by D. H. Lehmer and recorded as Problem F12 in Guy’s book \cite{Gu81}. When $p$ is an odd prime, a simple observation shows that $L(p)\equiv 2\bmod 4$ if $p\equiv 1\bmod 4$ and $L(p)\equiv 0\bmod 4$ if $p\equiv 3\bmod 4$. 

This above problem was initiated by Zhang \cite{Zha92}, who used Dirichlet $L$-functions and estimates for character sums and Kloosterman sums to prove an asymptotic formula for $L(p)$ with odd primes $p$. In general, for each odd integer $q>1$, he can prove the following asymptotic formula
\begin{align}\label{eq:zhang}
    L(q)=\frac{1}{2}\varphi(q)+O(q^{1/2}\tau(q)^2\log^2q).
\end{align}
See \cite{Zha93,Zha94,LRS07,LY09,Sh09} for other relevant progresses.

Zheng \cite{Zhe93} generalized Zhang's result to short intervals. We put
\begin{align*}
     \cL(N,q)=[1,N]\cap \cL(q),\ \ \ L(N, q)=\vert\cL(N ,q)\vert.
\end{align*}
For each odd integer $q>1$ and $1\le N\le q-1$, Zheng \cite{Zhe93} proved that
\begin{align}\label{eq:zheng}
    L(N,q)=\frac{1}{2}N\varphi(q)q^{-1}+O(q^{1/2}\tau(q)\log^2q).
\end{align}
Taking $N=q-1$ in (\ref{eq:zheng}), one obtains the asymptotic formula in \eqref{eq:zhang}. One sees that (\ref{eq:zheng}) is nontrivial for all $N\gg q^{1/2+\varepsilon}$ with $\varepsilon>0$. 

In fact, the original counting problem can ultimately be reduced to estimates for exponential sums. This reduction is formulated in Lemma \ref{lm:counting-expsums} in a weighted form. Consequently, an asymptotic formula for $L(N,q)$ follows once nontrivial bounds for these sums are available in very short intervals. The main aim of this paper is to use new bounds for incomplete Kloosterman sums, due to Wu and Xi \cite{WX21}, to improve (\ref{eq:zheng}) when $q$ is squarefree and has sufficiently good
factorizations.

\begin{theorem}\label{thm:main}
    Let $1\le N\le q-1$. Suppose that $q$ is squarefree and has only prime factors not exceeding $q^{\eta}$ with $\eta>0$ sufficiently small. For all $m\in\bZ^+$ and $\varepsilon>0,$ we have
        \begin{align*}
    L(N, q)=\frac{1}{2}N\varphi(q)q^{-1}+O(N^{\lambda-\kappa+O(\eta)}q^{\kappa+\varepsilon}),
\end{align*}
where
\begin{align*}
    (\kappa, \lambda)=\Big(\frac{1}{2^{m+2}-2},\frac{2^{m+2}-m-3}{2^{m+2}-2}\Big).
\end{align*}
The implied constant depends at most on $m,\eta$ and $\varepsilon$.
\end{theorem}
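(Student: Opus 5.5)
We reduce the counting problem to incomplete Kloosterman sums and then feed in the arithmetic exponent pair bounds of Wu and Xi \cite{WX21}. First write the parity condition with additive characters modulo $2$. For $a$ coprime to $q$ with $1\le a\le N$, the condition $2\nmid a+\bar a$ is detected by $\frac12(1-(-1)^{a+\bar a})$. We handle $(-1)^{\bar a}$ by writing $\bar a$ via its residue class modulo $q$ and detecting the parity of the representative in $[1,q-1]$ with additive characters modulo $2q$ (equivalently, by completing the condition $\bar a\equiv b\bmod q$, $2\mid b$ or not, $1\le b\le q-1$). This is the content we expect from Lemma \ref{lm:counting-expsums}, stated in weighted form: with a smooth weight $w$ approximating $\mathbf 1_{[1,N]}$ on a scale $\Delta$, it should give
\begin{align*}
L(N,q)=\frac12 N\varphi(q)q^{-1}+O\Big(\Delta+\sum_{0<|h|\le H}\frac{1}{|h|}\Big|\sum_{(n,q)=1}w(n)e\Big(\frac{\alpha n+ h\bar n}{q}\Big)\Big|+\text{(small)}\Big),
\end{align*}
where $H\approx q^{1+\varepsilon}/\Delta$ or similar, and $\alpha$ runs over $O(1)$ shifts coming from the parity of $n$ (these are characters modulo $2$, absorbed by splitting $n$ into residue classes mod $2$, or equivalently by working modulo $2q$). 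The main term comes from $h=0$ and uses only that coprime integers are equidistributed in $[1,N]$ up to $O(\tau(q))$.

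The second step bounds each incomplete Kloosterman sum $\sum_{n\sim N}w(n)e(h\bar n/q)$ (with the mod $2$ twist) using the arithmetic exponent pair $(\kappa,\lambda)$ from \cite{WX21}. For squarefree $q$ that is $q^{\eta}$-smooth, their theorem gives
\begin{align*}
\sum_{n}w(n)e\Big(\frac{h\bar n}{q}\Big)\ll \Big(\frac{q}{(h,q)}\Big)^{\kappa+\varepsilon}N^{\lambda-\kappa+O(\eta)},
\end{align*}
with the pairs $(\kappa,\lambda)=\big(\frac{1}{2^{m+2}-2},\frac{2^{m+2}-m-3}{2^{m+2}-2}\big)$ obtained by iterating the $q$-analogue of van der Corput's $A$-process $m$ times from the Weil-bound pair. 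The factor $(h,q)$ is handled by reducing $h\bar n/q$ to a fraction with denominator $q/(h,q)$, which is still squarefree and smooth; this only improves the bound. The mod $2$ twist is handled by writing $n=2n'+j$, which turns the phase into $h\overline{(2n'+j)}/q$, still a rational function of $n'$ of the type allowed in \cite{WX21}, or by absorbing $e(n/2)$ as a linear phase, which arithmetic exponent pairs tolerate.

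Finally we sum over $h$. The harmonic weight $1/|h|$ costs only $\log q$, and the contribution of $(h,q)$ is bounded by $\tau(q)\ll q^{\varepsilon}$. Choosing $\Delta$ small, say $\Delta=N^{\lambda-\kappa}q^{\kappa}$ or smaller, gives the stated error term. The main obstacle is the first step: making Lemma \ref{lm:counting-expsums} produce sums whose length is genuinely $N$ and not $q$ in the $\bar n$ variable. The parity of $\bar n$ concerns the full range $[1,q-1]$, so its detection has to be Fourier-expanded in the $\bar n$ variable, a complete sum modulo $q$ that yields the frequency $h$ attached to $\bar n$. The variable $n$ keeps only the short smooth weight. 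I would try first to expand $\mathbf 1_{2\mid b}$ for $1\le b\le q-1$ as $\frac1q\sum_{h\bmod q}$ against the indicator of even residues, which has Fourier coefficients of size $\ll 1/\|h/q\|$ (a geometric sum). This produces precisely the $1/|h|$ weights, without any truncation parameter in $h$.
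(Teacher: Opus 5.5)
Your final plan matches the paper's argument, written additively. You Fourier-expand only the parity of $\bar n$ over the complete residue system modulo $q$, so the $1/|h|$ weights come from a geometric sum while $n$ stays in the sharp interval $[1,N]$; you then split $n$ by parity, apply Lemma~\ref{lm:incompletekloosterman}, and use $\sum_{0<|h|<q/2}(h,q)/|h|\ll q^{\varepsilon}$. The paper reaches the same incomplete Kloosterman sums through odd characters and the Gauss-sum expansion of $\chi(b)$. The smoothing $w,\Delta,H$ in your first paragraph is unnecessary, since the lemma holds for sharp intervals.

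One slip: your reduced-modulus bound $(q/(h,q))^{\kappa+\varepsilon}N^{\lambda-\kappa+O(\eta)}$ is false when $(h,q)$ is large. For $q=3d$ and $h=d$ the sum has size $\asymp N\varphi(d)/d$. It must carry the extra term $N(h,q)/q$, as in Lemma~\ref{lm:incompletekloosterman}. After the $1/|h|$ weighting this term contributes only $O(\tau(q)\log q)=O(q^{\varepsilon})$, so your conclusion is unaffected.
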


By taking $m$ sufficiently large and $\varepsilon$ sufficiently small, Theorem \ref{thm:main} gives an asymptotic formula for all $N\gg q^{\varepsilon}$, provided that $\eta$ is sufficiently small in terms of $\varepsilon$. This allows us to derive the following immediate consequence.

\begin{corollary}\label{coro}
Under the hypotheses of Theorem \ref{thm:main}, for any $\varepsilon>0$, there exists an integer $a$ with $1\le a\ll q^{\varepsilon}$ such that $(a, q)=1$ and $2\nmid a+\bar{a}$.
\end{corollary}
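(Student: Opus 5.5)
We deduce Corollary \ref{coro} directly from Theorem \ref{thm:main}. We take $N=\lfloor q^{\varepsilon}\rfloor$; we may assume $q$ is large in terms of $\varepsilon$, since otherwise $a$ can be taken below a constant depending only on $\varepsilon$, and $L(q)\ge 1$ for every odd $q>1$. It suffices to show $L(N,q)\ge 1$, i.e.\ that the main term $\frac12 N\varphi(q)q^{-1}$ dominates the error term $N^{\lambda-\kappa+O(\eta)}q^{\kappa+\varepsilon'}$. Here $\varepsilon'$ is the auxiliary $\varepsilon$ of the theorem, chosen much smaller than $\varepsilon$.

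First we lower-bound the main term. Since $q$ is squarefree and odd, $\varphi(q)/q=\prod_{p\mid q}(1-1/p)\gg 1/\log\log q$. Hence the main term is $\gg q^{\varepsilon}/\log\log q\ge q^{\varepsilon-\varepsilon'}$ for large $q$.

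Next we bound the error term. The exponent computation is
\begin{align*}
1-(\lambda-\kappa)=\frac{2^{m+2}-2-(2^{m+2}-m-4)}{2^{m+2}-2}=\frac{m+2}{2^{m+2}-2}=(m+2)\kappa.
\end{align*}
Thus the ratio error/main is at most
\begin{align*}
N^{-(m+2)\kappa+C\eta}q^{\kappa+2\varepsilon'}=q^{-\varepsilon(m+2)\kappa+C\eta\varepsilon+\kappa+2\varepsilon'}.
\end{align*}
The exponent equals $\kappa(1-(m+2)\varepsilon)+C\eta\varepsilon+2\varepsilon'$.

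We now fix the parameters in order:
\begin{itemize}
\item choose $m$ with $(m+2)\varepsilon\ge 2$, so the $\kappa$-part is $\le -\kappa$;
\item then choose $\varepsilon'<\kappa/8$;
\item finally take $\eta$ small enough that $C\eta\varepsilon<\kappa/4$.
\end{itemize}
This requirement on $\eta$ is exactly what the theorem's hypothesis ``$\eta$ sufficiently small'' permits, with $\eta$ depending on $\varepsilon$. The exponent is then $\le -\kappa/4<0$, so the error is $o$(main term) and $L(N,q)\ge 1$ for $q$ large. This yields some $a\le q^{\varepsilon}$ with $(a,q)=1$ and $2\nmid a+\bar a$.

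The only delicate point is the order of quantifiers. The constant $C$ in $O(\eta)$ may depend on $m$, so $\eta$ must be chosen after $m$. This is consistent with the phrase ``$\eta$ sufficiently small in terms of $\varepsilon$'' following Theorem \ref{thm:main}.
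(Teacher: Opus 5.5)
Your deduction is correct and follows the paper's route: the paper gives only the one-line remark that Theorem \ref{thm:main} with $N\asymp q^{\varepsilon}$, $m$ large and $\eta$ small yields the corollary. You make that explicit with the identity $1-(\lambda-\kappa)=(m+2)\kappa$ and the right order of choices ($m$ first, then the theorem's $\varepsilon'$, then $\eta$), which correctly handles the $m$-dependence of the constant in $O(\eta)$.

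One side claim is false, however. It is not true that $L(q)\ge 1$ for every odd $q>1$. For $q=3,7,15$, every unit $a$ satisfies $a\equiv \bar a \bmod{2}$, so $L(q)=0$. For example, modulo $15$ the inverse pairs are $(1,1),(2,8),(4,4),(7,13),(11,11),(14,14)$, and every pair sum is even.

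Your small-$q$ reduction therefore needs a different justification. By \eqref{eq:zhang}, only finitely many $q$ have $L(q)=0$. Each such $q_0$ violates the hypothesis that all prime factors are at most $q_0^{\eta}$ once $\eta<\log 3/\log q_0$. So every exception is excluded once $\eta$ is below an absolute constant. Alternatively, read the corollary only for $q$ sufficiently large, as the paper implicitly does.
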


Finally, we also mention that, as one may see from the subsequent proof, one may replace the interval $[1,N]$ by $(M,M+N]$ in Theorem \ref{thm:main}, for any given $M$. On the other hand, the statements in Theorem \ref{thm:main} and Corollary \ref{coro} would fail if $q$ is prime, in which case we revisit this problem in \cite{Sh26}.

\section{Preliminary lemmas}

We need the following bound for incomplete Kloosterman sums, which follows from the theory of arithmetic exponent pairs in \cite{WX21}.
\begin{lemma}\label{lm:incompletekloosterman}
     Suppose that $q$ is squarefree and has only prime factors not exceeding $q^{\eta}$ with $\eta>0$ sufficiently small. For all $h\in \bZ$, any interval $I$ and $m\in\bN$, we have
     \begin{align*}
         \sum_{\substack{n\in I\\(n, q)=1}}e\Big(\frac{h\bar{n}}{q}\Big)\ll \frac{\vert I\vert}{q}(h, q)+q^{\kappa+\varepsilon}\vert I\vert^{\lambda-\kappa+O(\eta)}(h, q),
     \end{align*}
where
\begin{align}\label{eq:exponent pairs}
    (\kappa, \lambda)=\Big(\frac{1}{2^{m+2}-2},\frac{2^{m+2}-m-3}{2^{m+2}-2}\Big).
\end{align}
The implied constant depends at most on $m,\eta$ and $\varepsilon$.
\end{lemma}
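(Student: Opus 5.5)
Lemma \ref{lm:incompletekloosterman} is essentially a reformulation of the arithmetic exponent pair machinery of Wu and Xi \cite{WX21}, so the plan is to reduce the sum to the shape treated there and then quote their result.

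\emph{Step 1: reduce to $(h,q)=1$.} Put $d=(h,q)$, $h=dh'$ and $q=dq'$. Since $q$ is squarefree, $(d,q')=1$ and $(h',q')=1$. The summand then depends only on $\bar n \bmod q'$, that is, $e(h\bar n/q)=e(h'\bar n'/q')$, where $\bar n'$ is the inverse of $n$ modulo $q'$. The remaining condition $(n,d)=1$ is removed by Möbius inversion over $e\mid d$: writing $n=ek$,
\[
\sum_{\substack{n\in I\\ (n,q)=1}} e\Big(\frac{h\bar n}{q}\Big)=\sum_{e\mid d}\mu(e)\sum_{\substack{k\in I/e\\ (k,q')=1}} e\Big(\frac{h'\bar e\,\bar k}{q'}\Big).
\]
Here $q'$ is again squarefree and $q^\eta$-smooth, so each inner sum is of the same type with coprime frequency $h'\bar e$.

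\emph{Step 2: the coprime case.} For a squarefree $q'$ whose prime factors are at most $q^\eta$, and $(a,q')=1$, I want
\[
\sum_{k\in J,\ (k,q')=1} e\Big(\frac{a\bar k}{q'}\Big)\ll \frac{|J|}{q'}+q'^{\kappa+\varepsilon}|J|^{\lambda-\kappa+O(\eta)}.
\]
This is the arithmetic exponent pair estimate of \cite{WX21}. Their method uses $q$-analogue van der Corput $A$-processes, made possible by the good factorizations of $q'$, together with the Weil bound as the starting pair $(1/2,1/2)$, to which one applies $B$-type and $A$-type transformations. After $m$ iterations of the $A$-process, the pair $(1/2,1/2)$ becomes
\[
(\kappa,\lambda)=\Big(\frac{1}{2^{m+2}-2},\ \frac{2^{m+2}-m-3}{2^{m+2}-2}\Big)
\]
up to the $O(\eta)$ losses, which matches \eqref{eq:exponent pairs}. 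Each $A$-step factors the modulus as $q'=q_1q_2$ with $q_1$ near a prescribed size, and smoothness guarantees such a factor exists within a factor $q^\eta$; this is the source of the $|J|^{O(\eta)}$ loss. The term $|J|/q'$ accounts for intervals longer than the modulus: split $J$ into complete periods plus one incomplete piece. Complete Ramanujan-type sums $\sum_{k\bmod q'}^* e(a\bar k/q')=\mu(q')$ are bounded, so the full periods contribute $O(|J|/q')$.

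\emph{Step 3: assemble.} Each $e\mid d$ contributes at most $|I|/(eq')+q^{\kappa+\varepsilon}(|I|/e)^{\lambda-\kappa+O(\eta)}$. Since $q'\ge q/d$, we have $|I|/(eq')\le |I|d/(eq)$. Summing over $e\mid d$ then gives at most $\tau(d)\big(|I|d/q+q^{\kappa+\varepsilon}|I|^{\lambda-\kappa+O(\eta)}\big)$. Finally $\tau(d)\ll q^{\varepsilon}$ is absorbed, and we use $d=(h,q)$, which gives the claimed bound.

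\emph{Main obstacle.} The hardest part is Step 2: I need the precise form of the \cite{WX21} theorem to land exactly on $(\kappa,\lambda)$, with the correct normalization of $q$ versus $|I|$ and the $O(\eta)$ dependence. I would first check that their exponent pair statement for $\sum_n e(a\bar n/q)$ over $n\sim N$ reads $\ll q^{\kappa+\varepsilon}N^{\lambda-\kappa+O(\eta)}$, and that iterating $A$ $m$ times on $(1/2,1/2)$, in their $q$-analogue convention $A(\kappa,\lambda)=\big(\kappa/(2\kappa+2),(\kappa+\lambda+1)/(2\kappa+2)\big)$, gives the stated pair.
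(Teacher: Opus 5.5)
Your overall route is the paper's: the paper gives no argument beyond quoting the bound, with the factor $(h,q)$ already in place, from \cite[Section 9]{WX21} or \cite[Lemma 3.7]{XZ26}. Your check that $m$ applications of $A$ to $(1/2,1/2)$ give $(\kappa,\lambda)$ is correct. The gap is in the reduction you add. After extracting $d=(h,q)$, you apply the coprime estimate to the modulus $q'=q/d$, on the grounds that $q'$ is squarefree with prime factors at most $q^{\eta}$. But the lemma's hypothesis, applied to $q'$, is that every prime factor of $q'$ is at most $q'^{\eta}$. This relative smallness is also what the $q$-analogue $A$-process needs: it uses divisors of the modulus of prescribed size, and the loss is controlled by the largest prime factor measured against that size. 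This fails when $d$ is large. Your Step 2, as stated, contains no $q$ at all. Take $q'=p$ a single prime; it is $q^{\eta}$-smooth for every large $q$ divisible by $p$. Step 2 then asserts $\sum_{k\in J}e(a\bar k/p)\ll |J|/p+p^{\kappa+\varepsilon}|J|^{\lambda-\kappa+O(\eta)}$. Taking $m$ large, this is a power saving for Kloosterman sums to a prime modulus over intervals of length $p^{\delta}$, for every fixed $\delta>0$. That is far beyond what is known, so the claim that ``each inner sum is of the same type'' is not justified.

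The repair uses the fact that you only need $q^{\kappa}$, not $q'^{\kappa}$, at the end. If $q'\le q^{2\kappa}$, complete the inner sum and use Weil's bound: it is $\ll |J|/q'+q'^{1/2+\varepsilon}\le |J|/q'+q^{\kappa+\varepsilon}$, which is acceptable since $\lambda\ge\kappa$. If $q'>q^{2\kappa}$, every prime factor of $q'$ is at most $q^{\eta}<q'^{(2^{m+1}-1)\eta}$. So $q'$ does satisfy the hypothesis with $\eta'=(2^{m+1}-1)\eta=O_m(\eta)$, and the Wu--Xi bound applies.

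A smaller point: absorbing $\tau(d)$ into the first term fails when $|I|$ is much larger than $q$. Remove complete periods modulo $q$ first; each contributes the Ramanujan sum $c_q(h)$, and $|c_q(h)|\le (h,q)$. After that $|I|<q$, and $\tau(d)|I|d/q\le \tau(d)\,d$ is absorbed by the second term, which carries the factor $(h,q)$.
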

\begin{proof}
    See \cite[Section 9]{WX21} or \cite[Lemma 3.7]{XZ26}.
\end{proof}

\begin{remark}
Suppose that $(h,q)=1.$ One sees Lemma \ref{lm:incompletekloosterman} beats the trivial bound for all $|I|>q^{\varepsilon}$ if $\eta$ is small enough and $m$ is large enough. However, for a general $q$, one can only take $m=0$, producing a non-trivial bound for 
$|I|>q^{1/2+\varepsilon}$, which is essentially the argument in \cite{Zhe93}.
\end{remark}

\begin{lemma}\label{lm:counting-expsums}
Let $q>1$ be an odd integer, $\{{a_n}\}$ an arbitrary coefficient. Then we have
    \begin{align}\label{eq:transform into character sums}
        \sum_{n\in \mathcal{L}(N, q)}a_n=\frac{1}{2}\sum_{\substack{n\le N\\(n, q)=1}}a_n+O\Big(\sum_{j=1}^2\sum_{0<\vert r\vert<q/2}\frac{1}{|r|}\Big\vert\sum_{n\in I_j}a_{n,j}e\Big(\frac{ k_jr\overline{n}}{q}\Big)\Big\vert\Big),
        \end{align}
where $I_1=[1, N/2]$, $a_{n, 1}=a_{2n}$, $4k_1\equiv \pm1\bmod q$ and $I_2=[1, N]$, $a_{n, 2}=a_{n}$, $2k_2\equiv \pm1\bmod q$. The implied $O$-constant is absolute.
\end{lemma}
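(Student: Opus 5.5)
The plan is to detect the parity of $\bar n$ by whether the residue $\bar{2}\,\bar n \bmod q$ lies in a half-interval of residues, and to replace that indicator by an odd function whose Fourier coefficients vanish at $r=0$ and decay like $1/|r|$. This gives the main term $\tfrac12$ exactly, with no $1/q$ correction.

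\emph{Step 1 (parity criterion).} Fix $n$ with $(n,q)=1$ and $1\le n\le N$. Since $1\le \bar n\le q-1$, we have $2\mid\bar n$ if and only if $\bar n=2b$ with $1\le b\le (q-1)/2$. Equivalently, the least positive residue of $k\bar n$ lies in $A=\{1,\dots,(q-1)/2\}$, where $2k\equiv 1\bmod q$.

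\emph{Step 2 (symmetrize).} Let $-A=\{(q+1)/2,\dots,q-1\}$. Then $A$ and $-A$ partition the nonzero residues, and $k\bar n\not\equiv 0$. Hence
\begin{align*}
\mathbf 1_A(k\bar n)=\tfrac12\big(1+g(k\bar n)\big),\qquad g=\mathbf 1_A-\mathbf 1_{-A}.
\end{align*}
The function $g$ is odd on $\bZ/q\bZ$, so $\hat g(0)=0$. A geometric-sum computation gives $|\hat g(r)|\ll 1/|r|$ for $0<|r|<q/2$, where $g(x)=\sum_{0<|r|<q/2}\hat g(r)e(rx/q)$. Because $g(-x)=-g(x)$, we may replace $k$ by $-k$ at will; this is the source of the sign $\pm$ in the statement.

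\emph{Step 3 (assemble).} We have $n\in\cL(q)$ exactly when $n$ is odd and $\bar n$ is even, or $n$ is even and $\bar n$ is odd. Writing $\chi(n)=\mathbf 1_A(k\bar n)$, this gives
\begin{align*}
\sum_{n\in\cL(N,q)}a_n=\sum_{\substack{n\le N,\ 2\nmid n\\(n,q)=1}}a_n\chi(n)+\sum_{\substack{n\le N,\ 2\mid n\\(n,q)=1}}a_n\big(1-\chi(n)\big).
\end{align*}
Inserting Step 2, this equals $\frac12\sum_{n\le N,(n,q)=1}a_n$ plus
\begin{align*}
\tfrac12\Big(\sum_{\substack{n\le N\\(n,q)=1}}a_n g(k\bar n)-2\sum_{\substack{m\le N/2\\(m,q)=1}}a_{2m}\,g(k\overline{2m})\Big).
\end{align*}
Here we used that $q$ is odd, so $(2m,q)=(m,q)$.

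Next we identify the two frequencies. In the second sum, $k\overline{2m}\equiv k\bar2\,\bar m$, and $k_1:=k\bar 2$ satisfies $4k_1\equiv 1$. In the first sum we take $k_2=k$, with $2k_2\equiv1$. Now expand each $g$ by its Fourier series, interchange the sums, and apply $|\hat g(r)|\ll 1/|r|$ together with the triangle inequality over $r$. This yields exactly the stated error with $I_1=[1,N/2]$, $a_{n,1}=a_{2n}$, and $I_2=[1,N]$, $a_{n,2}=a_n$. The coprimality condition is implicit in the inner sums, since $\bar n$ is only defined for $(n,q)=1$.

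\emph{Main obstacle.} The only delicate point is obtaining the main term exactly $\frac12$. A naive expansion of $\mathbf 1_A$ has zeroth coefficient $(q-1)/(2q)$, which leaves an error of size $q^{-1}\sum|a_n|$ that is not of the required form. The symmetrization in Step 2, which uses that $k\bar n$ never vanishes, removes this term. After that, the only analytic input is the uniform bound $\hat g(r)\ll 1/|r|$, which is routine: $\hat g(r)$ is a difference of two geometric sums over half of $\bZ/q\bZ$, each bounded by $\ll 1/\|r/q\|\cdot q^{-1}\ll 1/|r|$.
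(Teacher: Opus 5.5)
Your proof is correct, and it takes a more direct route than the paper's.

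\textbf{How the paper argues.} It detects $nb\equiv 1\bmod q$ with multiplicative characters. It then observes that $\sum_{b<q}(-1)^b\chi(b)$ vanishes for even $\chi$; this includes the principal character, which is why the main term is exactly $\frac12$. Next it expands $\sum_{b\le (q-1)/2}\chi(b)$ for odd $\chi$ through the Gauss sums $\tau(r,\chi)$. Finally it collapses the sum over odd characters by orthogonality, producing $e(\pm r\overline{4n}/q)$ and $e(\pm r\overline{2n}/q)$.

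\textbf{How your argument differs.} You bypass characters entirely: you write $(-1)^{\bar n}=g(\bar 2\,\bar n)$ and expand $g$ in additive characters on $\bZ/q\bZ$. The oddness of $g$ comes from the same fact the paper uses, namely that $b\mapsto q-b$ flips parity when $q$ is odd. It does the job that the vanishing of the even characters does in the paper. Your coefficients are exactly the paper's after antisymmetrization: $\hat g(r)=c(r)-c(-r)$, where $c(r)=\delta_{r,q}/(q(1-e(-r/q)))$ is the coefficient in the paper. So once the paper pairs $e(r\bar n/q)$ with $e(-r\bar n/q)$, the two computations coincide.

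\textbf{What each route buys.} Yours is shorter and more transparent: there are no Gauss sums or character orthogonality, and one sees at once where $k_1\equiv\bar 4$, $k_2\equiv\bar 2$ and the weight $1/|r|$ come from. The paper's version stays inside the character-sum framework of Zhang and Zheng. That is what lets the subsequent Remark contrast completing only the $b$-sum, which gives incomplete Kloosterman sums, with Zheng's completion of both sums, which gives complete ones.

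\textbf{A small correction to your closing remark.} The naive expansion of $\mathbf 1_A$ would also have worked, so the symmetrization is cleaner but not essential. The zero-frequency leftover is $\frac{1}{2q}\big(2\sum_{m\le N/2}a_{2m}-\sum_{n\le N}a_n\big)$, with coprimality to $q$ understood. For $(n,q)=1$ and $(k_j,q)=1$ we have $\sum_{0<|r|<q/2}e(k_jr\bar n/q)=-1$, and $1/q<1/(2|r|)$ throughout $0<|r|<q/2$. Hence each of the two sums is already bounded by the corresponding piece of the stated error term.
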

\begin{proof}
    By the definition of $\mathcal{L}(q)$, we have
    \begin{align}
        \sum_{n\in \cL (N, q)}a_n&=\frac{1}{2}\mathop{\sum_{n\le N}\sum_{b<q}}_{nb\equiv 1\bmod q}a_n(1-(-1)^{n+b})\notag\\
        &=\frac{1}{2}\sum_{\substack{n\le N\\(n, q)=1}}a_n-\frac{1}{2}\mathop{\sum_{n\le N}\sum_{b<q}}_{nb\equiv 1\bmod q}a_n(-1)^{n+b}\notag\\
        &=\frac{1}{2}\sum_{\substack{n\le N\\(n, q)=1}}a_n-\frac{1}{2\varphi(q)}\sum_{\chi\bmod q}\Big(\sum_{n\le N}(-1)^na_n\chi(n)\Big)\Big(\sum_{1\le b\le q-1}(-1)^b\chi(b)\Big).\label{eq:restrict chi}
    \end{align}
For the sum over $n$ in (\ref{eq:restrict chi}), we may write
\begin{align*}
    \sum_{n\le N}(-1)^na_n\chi(n)=
    \sum_{n\le N}(1+(-1)^n)a_n\chi(n)-\sum_{n\le N}a_n\chi(n)=2\sum_{n\le N/2}a_{2n}\chi(2n)-\sum_{n\le N}a_n\chi(n).
\end{align*}
For the sum over $b$, we have
\begin{align*}
    \sum_{b=1}^{q-1}(-1)^b\chi(b)=
    \begin{cases}
        0, & \text{if} \  \chi(-1)=1,\\
        2\chi(2)\sum_{1\le b\le q/2}\chi(b), & \text{if} \ \chi(-1)=-1,
    \end{cases}
\end{align*}
hence only odd characters contribute to (\ref{eq:restrict chi}). 
Thus the second term in (\ref{eq:restrict chi}) equals
\begin{align}\label{eq:second term}
    \frac{1}{2\varphi(q)}\sum_{\chi(-1)=-1}\chi(2)\Big(2\sum_{n\le N/2}\chi(2n)-\sum_{n\le N}\chi(n)\Big)\Big(\sum_{1\le b\le \frac{q-1}{2}}\chi(b)\Big).
\end{align}

 For each Dirichlet character $\chi\bmod q$, define the Gauss sum $\tau(r, \chi)$ by
\begin{align}\label{eq:Gauss sum}
    \tau(r, \chi)=\sum_{c\bmod q}\chi(c)e\Big(\frac{rc}{q}\Big).
\end{align}
For any non-principal character $\chi\bmod q$ with odd $q$, we may write
\begin{align}\label{eq:expansion of chi}
    \chi(b)=\frac{1}{q}\sum_{0<\vert r\vert<q/2} \tau(r,\chi)e\Big(-\frac{br}{q}\Big).
\end{align}
This implies
\begin{align*}
    \sum_{1\le b\le \frac{q-1}{2}}\chi(b)=\frac{1}{q}\sum_{0<\vert r\vert<q/2}\frac{\tau(r,\chi)\delta_{r,q}}{1-e(-r/q)},
\end{align*}
where $\delta_{r,q}=e(-r/q)(1-e(-r(q-1)/2q))$ and $\vert\delta_{r,q}\vert\leq 2$.
For each $n$ with $(n, q)=1$, by (\ref{eq:Gauss sum}) and the orthogonality of Dirichlet characters, we have 
\begin{align}
    \sum_{\chi(-1)=-1}\chi(n)\tau(r,\chi)&=\sum_{\chi(-1)=-1}\chi(n)\sum_{c\bmod q}\chi(c)e\Big(\frac{rc}{q}\Big)\notag\\
    &=\sum_{c\bmod q}e\Big(\frac{rc}{q}\Big)\sum_{\chi(-1)=-1}\chi(nc)\notag\\
    &=\frac{\varphi(q)}{2}\Big(e\Big(\frac{r\overline{n}}{q}\Big)-e\Big(-\frac{r\overline{n}}{q}\Big)\Big).\label{eq:after sum over chi}
\end{align}
Combining (\ref{eq:expansion of chi}) and (\ref{eq:after sum over chi}), we see that (\ref{eq:second term}) equals
\begin{align}\label{eq:sum involve Omega}
    \frac{1}{2q}\sum_{0<\vert r\vert<q/2}\frac{\delta_{r,q}}{1-e(-r/q)}(\Omega_1(r, q; N)-\Omega_2(r, q; N)),
\end{align}
where
\begin{align*}
    &\Omega_1(r, q; N)=2\sum_{n\le N/2}a_{2n}\Big(e\Big(\frac{r\overline{4n}}{q}\Big)-e\Big(-\frac{r\overline{4n}}{q}\Big)\Big)=2\sum_{j=1}^2\sum_{n\le N/2} a_{2n}(-1)^je\Big(\frac{(-1)^j r \overline{4n}}{q}\Big),\\
    &\Omega_2(r, q; N)=\sum_{n\le N}a_n\Big(e\Big(\frac{r\overline{2n}}{q}\Big)-e\Big(-\frac{r\overline{2n}}{q}\Big)\Big)=\sum_{j=1}^2\sum_{n\le N}a_n(-1)^je\Big(\frac{(-1)^j r\overline{2n}}{q}\Big).
\end{align*}
Since $0<\vert r\vert<q/2$, we have 
\begin{align}\label{eq:sum over b estimate}
   \frac{1}{\vert 1-e(-r/q)\vert}=\frac{1}{2|\sin\pi r/q|}\ll\frac{q}{\vert r\vert}.
\end{align}
Hence, (\ref{eq:sum involve Omega}) is at most
\begin{align*}
    \ll \sum_{0<\vert r\vert<q/2}\frac{1}{\vert r\vert}\Big(\vert\Omega_1(r, q; N)\vert+\vert\Omega_2(r, q; N)\vert\Big).
\end{align*}
Thus the lemma follows.
\end{proof}

\begin{remark}
    The incomplete Kloosterman sums appear in our proof because we adopt a different way to treat the RHS of (\ref{eq:restrict chi}). To illustrate the underlying ideas, we simply consider
    \begin{align}\label{eq:remark}
        \cE:=\frac{1}{\varphi(q)}\sum_{\substack{\chi\bmod q\\\chi(-1)=-1}}\Big(\sum_{a\le N}\chi(a)\Big)\Big(\sum_{1\le b\le \frac{q-1}{2}}\chi(b)\Big).
    \end{align}
    Zheng \cite{Zhe93} used the completing method, that is, using the Fourier expansion (\ref{eq:expansion of chi}) of characters to transform the sum over both $a$ and $b$ in (\ref{eq:remark}) to get
    \begin{align}\label{eq:transform in Zheng}
       \cE=\frac{1}{\varphi(q)q^2}\sum_{\substack{\chi\bmod q\\\chi(-1)=-1}}\Big(\sum_{0<\vert m\vert<q/2}\frac{\tau(m,\chi)f_1(m)}{1-e(-m/q)}\Big)\Big(\sum_{0<\vert r\vert<q/2}\frac{\tau(r, \chi)f_2(r)}{ 1-e(-r/q)}\Big),
    \end{align}
where $f_1(m), f_2(r)\ll 1$. The sum over $\chi$ gives
\begin{align*}
    \frac{1}{\varphi(q)}\sum_{\substack{\chi\bmod q\\\chi(-1)=-1}}\tau(r,\chi)\tau(m,\chi)=\frac{1}{2}\Big(S(r, m;q)-S(r, -m; q)\Big).
\end{align*}
By Weil’s bound for Kloosterman sums, together with estimates for geometric sums, the RHS of (\ref{eq:transform in Zheng}) is bounded by $O(q^{1/2+\varepsilon})$. The factor $q^{1/2}$ corresponds to the error term in (\ref{eq:zheng}) and this means one can only obtain the nontrivial estimate in the Pólya–Vinogradov range $N\gg q^{1/2+\varepsilon}$.

In the present paper, our goal is to obtain nontrivial estimates for \eqref{eq:remark} with smaller $N$. It's difficult to explore cancellations in the $a$-sum directly, thus we keep the sum over $a$ in (\ref{eq:remark}) and use (\ref{eq:expansion of chi}) to transform the sum over $b$ as above. Comparing with (\ref{eq:transform in Zheng}), we have
\begin{align*}
    \cE=\frac{1}{\varphi(q)q}\sum_{\substack{\chi\bmod q\\\chi(-1)=-1}}\Big(\sum_{a\le N}\chi(a)\Big)\Big(\sum_{0<\vert r\vert<q/2}\frac{\tau(r, \chi)f_2(r)}{1-e(-r/q)}\Big).
\end{align*}
Now, for $(a,q)=1$, the sum over $\chi$ gives
\begin{align*}
    \frac{1}{\varphi(q)}\sum_{\substack{\chi\bmod q\\\chi(-1)=-1}}\chi(a)\tau(r,\chi)=\frac{1}{2}\Big(e\Big(\frac{r\overline{a}}{q}\Big)-e\Big(-\frac{r\overline{a}}{q}\Big)\Big).
\end{align*}
Hence the sum over $a$ gives the incomplete Kloosterman sums instead of complete ones $S(*,*; q)$. When the modulus $q$ has good factorizations, it is possible to beat the classical Pólya–Vinogradov range as presented in Lemma \ref{lm:incompletekloosterman}.
\end{remark}

\section{Proof of Theorem \ref{thm:main}}\label{section:N small}
To prove Theorem \ref{thm:main}, we apply Lemma \ref{lm:counting-expsums} with $a_n\equiv 1$. Hence, the first term on the RHS of (\ref{eq:transform into character sums})
\begin{align*}
    \sum_{\substack{a\le N\\(a, q)=1}}1=\sum_{d\vert q}\mu(d)\Big[\frac{N}{d}\Big]=N\varphi(q)q^{-1}+O(\tau(q)),
\end{align*}
and the error term essentially is
    \begin{align}\label{eq:bound the error term}
        \sum_{0<\vert r\vert<q/2}\frac{1}{|r|}\Big\vert\sum_{a\in I}e\Big(\frac{ kr\overline{a}}{q}\Big)\Big\vert,
    \end{align}
where $\vert I\vert\asymp N$ and $(k,q)=1$. For the inner sum, we apply Lemma \ref{lm:incompletekloosterman} to get
\begin{align*}
    \sum_{a\in I}e\Big(\frac{kr\overline{a}}{q}\Big)\ll q^{\kappa+\varepsilon/2}N^{\lambda-\kappa+O(\eta)}(r, q),
\end{align*}
where $(\kappa, \lambda)$ is defined as in (\ref{eq:exponent pairs}). Hence, (\ref{eq:bound the error term}) can be bounded by
\begin{align*}
     \ll q^{\kappa+\varepsilon/2}N^{\lambda-\kappa+O(\eta)}\sum_{0<\vert r\vert<q/2}\frac{(r, q)}{\vert r\vert}\ll N^{\lambda-\kappa+O(\eta)}q^{\kappa+\varepsilon}.
\end{align*}
Now Theorem \ref{thm:main} follows.

\section{Counting with structures}

As one may see from above, Lemma \ref{lm:counting-expsums} allows us to study the general counting function
    \begin{align*}
     \sum_{n\in \mathcal{L}(N, q)}a_n,
    \end{align*}
as long as the exponential sum
\begin{align}\label{eq:generalexponentialsum}
\sum_{n\leqslant N}a_ne\Big(\frac{k\overline{n}}{q}\Big)
\end{align}
can be bounded non-trivially in an explicit way.
The difficulty in estimating \eqref{eq:generalexponentialsum} relies on the structure of $\{a_n\}$ and the size of $N$ compared to $q$.

We now take $a_n=\Lambda(n),$ the von Mangoldt function, or the indicator function of primes, then \eqref{eq:generalexponentialsum} reduces to exponential sums in prime variables. For prime $q$, such exponential sums have been studied in depth by Fouvry and Michel \cite{FM98}, Bourgain \cite{Bo05} and Fouvry, Kowalski and Michel \cite{FKM14}. 
It should be worthwhile to mention that \cite{FM98} and \cite{FKM14} deal with sums of $e(f(n)/q)$ with general rational function $f$, or trace functions defined over finite fields, and the arguments use quite a lot of $\ell$-adic cohomology from algebraic geometry. More precisely, their bounds are non-trivial as long as $N>q^{3/4+\varepsilon}.$ On the other hand, with ingenious inputs from additive combinatorics, Bourgain \cite[Theorem A.9]{Bo05} can succeed with non-trivial bounds when $N>q^{1/2+\varepsilon}.$ We recall Bourgain's estimate as follows.

\begin{lemma}[{Bourgain, \cite[Theorem A.9]{Bo05}}]\label{lm:Bourgain}
   Let $q$ be a large prime and $(k,q)=1$. For any fixed $\eta>0$ and $q^{1/2+\eta}<N<q$, there exists a constant $\delta=\delta(\eta)>0$ such that
   \begin{align*}
\sum_{p\leqslant N,\text{ prime}}e\Big(\frac{k\overline{p}}{q}\Big)\ll N^{1-\delta},
\end{align*}
where the implied constant depends at most on $\eta.$
\end{lemma}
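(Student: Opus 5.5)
Since Lemma~\ref{lm:Bourgain} is quoted from \cite{Bo05}, the following is only an outline of how I would reprove it; the main obstacle is the type II estimate in the third step. Write $N'=N/2$ and work with $\sum_{N'<n\le N}\Lambda(n)e(k\bar n/q)$ over dyadic ranges. Removing prime powers costs $O(N^{1/2})$, and partial summation then recovers the sum over primes.

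\textbf{Decomposition.} I would apply the Heath-Brown identity, or Vaughan's identity with parameters $U=V=q^{\eta/4}$. This reduces the problem to two kinds of sums. The type I sums are
\begin{align*}
\sum_{m\sim M}\alpha_m\sum_{n\sim N/M} e\Big(\frac{k\overline{mn}}{q}\Big),
\end{align*}
with $\alpha_m\ll\tau(m)\log q$ and $M\le q^{\eta/2}$. The type II sums are bilinear forms
\begin{align*}
\sum_{m\sim M}\sum_{n\sim N/M}\alpha_m\beta_n e\Big(\frac{k\overline{mn}}{q}\Big),
\end{align*}
with divisor-bounded coefficients and $q^{\eta/4}\ll M\ll N q^{-\eta/4}$. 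It then suffices to save a power $N^{-\delta}$ in each type, uniformly over all admissible $M$.

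\textbf{Type I.} For fixed $m$, the $n$-sum is an incomplete Kloosterman sum of length $N/M\ge q^{1/2+\eta/2}$. Completing it and applying the Weil bound gives $\ll q^{1/2}\log q$. Summing over $m$ gives $\ll Mq^{1/2+\varepsilon}\ll N^{1-\delta}$, because $M\le q^{\eta/2}$ and $N>q^{1/2+\eta}$. This case is routine.

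\textbf{Type II.} By symmetry, assume $K:=N/M\ge N^{1/2}\ge q^{1/4+\eta/2}$. I would apply H\"older's inequality in $m$ and extend the $m$-variable to all residues, using that $\bar m$ runs over distinct residues. For an integer $r\ge1$ this reduces the bound, up to the trivial factors $M^{1-1/(2r)}$ and $q^{1/(2r)}$, to the additive energy
\begin{align*}
J_r=\#\{(n_1,\dots,n_{2r}),\ n_i\sim K:\ \bar n_1+\dots+\bar n_r\equiv \bar n_{r+1}+\dots+\bar n_{2r}\bmod q\}.
\end{align*}
The needed input is a Bourgain--Garaev type bound $J_r\ll K^{2r-1/2-\gamma}$ or better, specifically $J_r\ll K^{2r}q^{-1}+K^{r+o(1)}$-type saving, for some fixed $r=r(\eta)$. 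The only route I see to such a bound is additive combinatorics. The set $A=\{\bar n:n\sim K\}$ consists of inverses of an interval, so $\overline{A}$ has small doubling. By the sum--product phenomenon in $\mathbf F_q$ (Bourgain--Katz--Tao, Bourgain--Glibichuk--Konyagin), $A$ itself cannot have near-maximal additive energy once $K\ge q^{\epsilon}$. Concretely, I would clear denominators, $\sum_{i\le r}\prod_{j\ne i}n_j\equiv \sum_{i>r}\prod_{j\ne i}n_j$, lift to congruences between integers of size $K^{2r-1}$, and use the Balog--Szemer\'edi--Gowers theorem together with a sum--product estimate to extract a power saving $q^{-\gamma(\eta)}$ over the trivial $K^{2r-1}$. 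Inserting this into the H\"older bound yields $\ll N^{1-\delta}$ when $MK=N>q^{1/2+\eta}$, with $\delta$ depending only on $\eta$.

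I expect the main obstacle to be the energy estimate for inverses. Classical completion only handles $MK>q^{3/4}$, which is the Fouvry--Michel range; reaching the threshold $q^{1/2+\eta}$ is exactly where the sum--product input becomes indispensable.
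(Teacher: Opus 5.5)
The paper gives no proof of this lemma; it simply quotes \cite[Theorem A.9]{Bo05}. Your reduction is fine: removing prime powers, Vaughan with $U=V=q^{\eta/4}$, and type I via Weil. The type II step, however, has a genuine quantitative gap.

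Your H\"older-plus-completion bound is $|S|\ll M^{1-1/(2r)}(qJ_r)^{1/(2r)}q^{\varepsilon}$. Writing $J_r=K^{2r-1}/\Delta$, this equals $N^{1-1/(2r)}(q/\Delta)^{1/(2r)}q^{\varepsilon}$, which beats $N$ only if $\Delta>q^{1+\varepsilon}/N$. Near $N\approx q^{1/2+\eta}$ you must therefore save about $q^{1/2-\eta}$ over the trivial bound $K^{2r-1}$. Equivalently you need $J_r\le K^{2r}Mq^{-1-\varepsilon}$, which is the conjecturally optimal $K^{2r}/q+K^{r}$ up to a factor $M$.

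A saving $q^{-\gamma(\eta)}$ with small $\gamma$ gives a nontrivial bound only when $N>q^{1-\gamma}$. That is the kind of saving Balog--Szemer\'edi--Gowers plus a sum--product estimate delivers: the argument only shows the energy of $A$ is not near-\emph{maximal}, whereas you need it near-\emph{minimal}. Your other stated targets do not help either. The bound $J_r\ll K^{2r-1/2-\gamma}$ is weaker than the trivial $J_r\le K^{2r-1}$ when $\gamma<1/2$. A bound of the shape $J_r\ll K^{2r-1/2}q^{-1/2}$ would still only cover $N>q^{2/3}$ when $M=K$. So the sentence ``inserting this into the H\"older bound yields $\ll N^{1-\delta}$'' is false, and your sketch does not produce the near-optimal bound you also mention.

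The loss comes from completing $\bar m$ over all residues modulo $q$. This discards the structure of the $m$-variable at a cost of $(q/M)^{1/(2r)}$. In the critical balanced range you must use the structure of both variables. Cauchy--Schwarz in $m$, followed by H\"older with exponents $(4/3,4)$ in $z=\bar n_1-\bar n_2$, gives
\[
|S|^8\ll (MK)^4\,q\,E(\mathcal{A})\,E(\mathcal{B})\,q^{\varepsilon},\qquad \mathcal{A}=\{\bar m: m\sim M\},\quad \mathcal{B}=\{\bar n: n\sim K\},
\]
where $E$ is additive energy. If $M,K\le (q/14)^{1/3}$, multiplying $\bar x_1+\bar x_2\equiv\bar x_3+\bar x_4$ by $x_1x_2x_3x_4$ gives an identity between integers, namely $1/x_1+1/x_2=1/x_3+1/x_4$. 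This has $\ll M^{2+\varepsilon}$ solutions by the divisor bound. Hence $|S|\ll N^{3/4}q^{1/8+\varepsilon}$, which is nontrivial as soon as $N>q^{1/2+\varepsilon'}$; this is where the $q^{1/2}$ threshold comes from.

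The unbalanced type II ranges still need separate arguments. Options include Weil after Cauchy--Schwarz once the longer variable exceeds $q^{1/2+\varepsilon}$, higher energies of the shorter variable where lifting to the integers applies, or the sum--product input of \cite{Bo05}.
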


Combining Bourgain's estimate with the above arguments, we are able to conclude the following prime-variable version of Lehmer's problem.
To this end, denote by $\cP$ the set of all primes, and by $\pi(N)$ the number of primes up to $N.$ The prime number theorem asserts that $\pi(N)\sim N/\log N.$

\begin{theorem}
     Let $q$ be a large prime. For any fixed $\eta>0$ and $q^{1/2+\eta}< N\le q-1$, there exists some constant $\delta=\delta(\eta)>0$ such that
    \begin{align*}
     |\mathcal{L}(N, q)\cap \cP|=\frac{1}{2}\pi(N)+O(N^{1-\delta}),
        \end{align*}
    where the implied constant depends at most on $\eta$.
\end{theorem}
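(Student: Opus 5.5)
We apply Lemma \ref{lm:counting-expsums} with $a_n=\mathbf{1}_{\cP}(n)$, the indicator function of primes. Since $q$ is a large prime and $N\le q-1<q$, every prime $p\le N$ satisfies $(p,q)=1$. So the main term on the right-hand side of \eqref{eq:transform into character sums} is exactly $\frac12\pi(N)$.

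It remains to bound the error term
\begin{align*}
\sum_{j=1}^2\sum_{0<|r|<q/2}\frac{1}{|r|}\Big|\sum_{n\in I_j}a_{n,j}e\Big(\frac{k_jr\overline{n}}{q}\Big)\Big|.
\end{align*}
We treat the two values of $j$ separately.

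For $j=2$ the coefficients are $a_{n,2}=\mathbf{1}_{\cP}(n)$ on $I_2=[1,N]$. Since $q$ is prime, $(k_2,q)=1$ and $0<|r|<q/2$, we have $(k_2r,q)=1$. Lemma \ref{lm:Bourgain} with $k=k_2r$ then gives a bound $\ll N^{1-\delta}$ for the inner sum, uniformly in $r$. The dependence on $k$ is harmless: Bourgain's bound is uniform over all $k$ coprime to $q$, with the implied constant depending only on $\eta$.

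For $j=1$ the coefficients are $a_{n,1}=a_{2n}=\mathbf{1}_{\cP}(2n)$, which is nonzero only for $n=1$, that is, only for the prime $2n=2$. So the inner sum has at most one term and is $O(1)$. This is the step most likely to cause confusion, since it has no connection to Bourgain's bound; it is simply the observation that there is only one even prime.

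Summing over $r$ with the weight $1/|r|$ costs a factor $\ll\log q$. The total error is therefore
\begin{align*}
\ll (N^{1-\delta}+1)\log q.
\end{align*}
Because $N>q^{1/2+\eta}$, we have $\log q\ll_\eta \log N\ll N^{\delta/2}$. Replacing $\delta$ by $\delta/2$ gives an error $O(N^{1-\delta})$ with $\delta=\delta(\eta)>0$, which proves the theorem.

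The only genuinely hard input is Lemma \ref{lm:Bourgain}; the argument itself needs just two checks. The first is that $k_2r$ remains coprime to $q$ for every $r$ in the range, which uses that $q$ is prime. The second is that the $\log q$ loss from the $r$-sum is absorbed by a small power of $N$, which uses $N>q^{1/2+\eta}$.
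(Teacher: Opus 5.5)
Your proof is correct and follows the route the paper indicates ("combining Bourgain's estimate with the above arguments"): apply Lemma~\ref{lm:counting-expsums} with $a_n$ the prime indicator, bound the $j=2$ sums uniformly in $r$ by Lemma~\ref{lm:Bourgain} (valid because $(k_2r,q)=1$ when $q$ is prime), and absorb the $\log q$ from the $r$-sum into a power $N^{\delta}$. Your observation that the $j=1$ sum is trivially $O(1)$, since $2n$ is prime only for $n=1$, fills in a detail the paper leaves implicit. It also spares you from needing $N/2>q^{1/2+\eta}$ to invoke Bourgain's lemma on that sum.
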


\section{Acknowledgments}
The author thanks his advisor, Professor Ping Xi, for introducing this problem and useful discussions. This work is supported in part by Shaanxi NSF (No. 2025JC-QYCX-002), Shaanxi Fundamental Science Research Project for Mathematics and Physics (No.25JSZ007) and China Scholarship Council.

\end{document}